\begin{document}
\theoremstyle{plain}
 \newtheorem{lemma}{Lemma}[section]
\newtheorem{theorem}[lemma]
{Theorem }
\newtheorem{corollary}[lemma]
{Corollary}
\newtheorem{proposition}[lemma]{Proposition}
\newtheorem*{thma}{Theorem 1.1'}

\theoremstyle{definition}
\newtheorem{definition}[lemma]{Definition }
\newtheorem{rmkdef}[lemma]{Remark-Definition}
\newtheorem{conjecture}[lemma]
{Conjecture }
\newtheorem{example}[lemma]
{Example }
\newtheorem{fact}[lemma]{Fact}
\newtheorem{remark}[lemma]{Remark}

\newcommand{\C}{{\mathbb C}}
\newcommand{\e}{\varepsilon}
\renewcommand{\l}{\lambda}
\newcommand{\In}{\operatorname{In}}
\newcommand{\Supp}{\operatorname{Supp}}
\newcommand{\s}{\sigma}
\renewcommand{\t}{\tau}
\newcommand{\NP}{\operatorname{NP}}
\newcommand{\V}{\mathcal V}
\renewcommand{\o}{\omega}
\renewcommand{\k}{\Bbbk}
\newcommand{\Sc}{\mathscr{S}_\preceq}
\newcommand{\T}{\mathcal{T}_\preceq}
\newcommand{\Ker}{\text{Ker}}
\newcommand{\A}{\k[[x]]}
\newcommand{\R}{\mathbb R}
\newcommand{\cha}{\operatorname{char}}
\newcommand{\m}{\mathfrak m}
\newcommand{\g}{\gamma}
\renewcommand{\O}{\mathcal{O}}
\newcommand{\D}{\Delta}
\renewcommand{\P}{\mathbb{P}}
\newcommand{\PP}{\mathcal P}
\renewcommand{\a}{\alpha}
\newcommand{\ovl}{\overline}
\newcommand{\G}{\mathcal{G}_\preceq}
\renewcommand{\(}{(\!(}
\renewcommand{\)}{)\!)}
\renewcommand{\b}{\beta}
\renewcommand{\int}{\operatorname{Int}}
\newcommand{\Int}{\operatorname{Int_{rel}}}
\newcommand{\ord}{\operatorname{ord}}
\newcommand{\K}{{\mathbb K}}
\newcommand{\N}{{\mathbb N}}
\newcommand{\Q}{{\mathbb Q}}
\newcommand{\Z}{{\mathbb Z}}
\newcommand{\sA}{{\mathcal A}}

\setlength\parindent{0pt}

\author{Guillaume Rond} 
\address{Guillaume Rond, Aix-Marseille Universit\'e, CNRS, Centrale Marseille, I2M, UMR 7373, 13453 Marseille, France}

\email{guillaume.rond@univ-amu.fr}

 \author{Bernd Schober}
\address{Bernd Schober\\
Institut f\"ur Algebraische Geometrie\\
Leibniz Universit\"at Hannover\\
Welfengarten 1\\
30167 Hannover\\
Germany}
\email{schober.math@gmail.com}

\title{An irreducibility criterion for  power series}

\begin{abstract}
We prove an irreducibility criterion for polynomials with power series coefficients generalizing previous results given in \cite{GBGP} and \cite{ACLM1}.

\end{abstract}

\subjclass{12E05, 13F25, 14B05, 32S25}

\thanks{G. Rond was partially supported by ANR projects STAAVF (ANR-2011 BS01 009) and SUSI (ANR-12-JS01-0002-01)}

\maketitle

\section{Introduction}
The aim of this note is to provide a  natural approach to an irreducibility criterion for polynomials with power series coefficients (see Theorem \ref{main}). The first version of the criterion has been given in \cite{GBGP} and then has been generalized in \cite{ACLM1}. In this note we give a more natural and elementary proof of a general version of this criterion. In particular, our statement holds over any field while the previous ones were only proven for algebraically closed fields of characteristic zero. Moreover, the only hypothesis that we need is that the projection of the Newton polyhedron has exactly one vertex while the previous known versions were involving additional technical conditions. \\
Let us recall that the proof given in \cite{GBGP} uses toric geometry and Zariski Main Theorem while the one provided in \cite{ACLM1} is based on a generalization of the Newton's method for plane curves. Our proof is essentially based on the following well known version of Hensel's Lemma:

\begin{proposition}[Hensel's Lemma] Let $(R,\m)$ be a Henselian local ring. A monic polynomial $P(Z)\in R[Z]$, that is the product of two monic coprime polynomials modulo $\m R[Z]$, is in fact the product of two coprime monic polynomials.
\end{proposition}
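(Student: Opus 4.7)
The plan is to transfer the factorization question to lifting idempotents in the finite $R$-algebra $A := R[Z]/(P)$, recover $g,h$ as characteristic polynomials of multiplication by $Z$, and then get coprimality from Nakayama. Since $P$ is monic of degree $n := \deg P$, the ring $A$ is $R$-free of rank $n$. Reducing modulo $\m$, the assumed coprime decomposition $\bar P = \bar g \bar h$ together with the Chinese Remainder Theorem yields
\[
\bar A := A/\m A \;\cong\; (R/\m)[Z]/(\bar g)\;\times\;(R/\m)[Z]/(\bar h),
\]
and in particular a pair of orthogonal idempotents $\bar e_1, \bar e_2 \in \bar A$ with $\bar e_1+\bar e_2 = 1$.

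I would then invoke one of the standard equivalent forms of Henselianness: for every finite $R$-algebra $A$, idempotents of $A/\m A$ lift to orthogonal idempotents of $A$ (see e.g.\ Raynaud, \emph{Anneaux locaux hens\'eliens}). This produces $e_1, e_2 \in A$ with $e_1+e_2=1$, $e_1 e_2 = 0$, and $A = A_1 \times A_2$ where $A_i := e_i A$. Each $A_i$ is a direct summand of the free $R$-module $A$, hence a finitely generated projective, and therefore free, $R$-module.

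Now I would recover the factorization. Let $g(Z), h(Z) \in R[Z]$ be the characteristic polynomials of multiplication by $Z$ on $A_1$, $A_2$ respectively; both are monic. Because multiplication by $Z$ on $A$ respects the decomposition $A_1 \oplus A_2$, its characteristic polynomial is $g \cdot h$; but in the basis $1, Z, \dots, Z^{n-1}$ the matrix of multiplication by $Z$ on $R[Z]/(P)$ is the companion matrix of $P$, whose characteristic polynomial is $P$ itself, so $P = gh$. The same principle applied over $R/\m$ shows that the characteristic polynomial of $Z$ on $(R/\m)[Z]/(\bar g)$ is $\bar g$, so the reduction of $g$ (resp.\ $h$) is the given $\bar g$ (resp.\ $\bar h$).

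For coprimality, $(g, h) + \m R[Z] = R[Z]$ by the hypothesis on $\bar g, \bar h$, and $R[Z]/(g,h)$ is a finitely generated $R$-module, being a quotient of $R[Z]/(g)$ which is free of rank $\deg g$ because $g$ is monic; Nakayama's lemma then forces $R[Z]/(g,h) = 0$, i.e.\ $(g,h) = R[Z]$. The delicate step is the idempotent-lifting claim: if one's working definition of Henselian is simple-root lifting for monic polynomials in $R[Z]$, the bridge to idempotent lifting in finite $R$-algebras requires a small but nontrivial argument --- essentially, $A$ is semilocal with Henselian local factors at its maximal ideals, and $\bar e$ is a simple root of $T^2 - T$ in each residue field, hence lifts.
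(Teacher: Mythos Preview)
The paper does not supply a proof of this proposition at all: Hensel's Lemma is invoked as a well-known black box, with no argument given. So there is nothing to compare your proof against on the paper's side.

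That said, your argument is correct and is one of the standard routes: pass to the finite free $R$-algebra $A=R[Z]/(P)$, lift the idempotents coming from the CRT decomposition of $A/\m A$, and recover the monic factors as characteristic polynomials of multiplication by $Z$ on the two summands. The identification of the reductions $\bar g,\bar h$ and the Nakayama step for coprimality are clean.

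One caution about your final parenthetical. You flag the idempotent-lifting step as delicate and cite Raynaud, which is appropriate, but the sketch you offer---``$A$ is semilocal with Henselian local factors at its maximal ideals, and $\bar e$ is a simple root of $T^2-T$''---is close to circular. The assertion that a finite algebra over a Henselian local ring decomposes as a product of (Henselian) local rings is \emph{equivalent} to idempotent lifting, so it cannot be used to justify it; and the simple-root form of Hensel's Lemma lifts roots in $R/\m$ to $R$, whereas $\bar e$ lives in $A/\m A$, not in $R/\m$. The honest bridge (as in Raynaud or EGA~IV) goes via Newton/successive approximation on $T^2-T$ inside $A$, using that $2\bar e-1$ is a unit in $A/\m A$, or via one of the other standard equivalences. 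Since you already cite the reference and label the step as nontrivial, this is a minor presentational point rather than a genuine gap.
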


We begin by giving some definitions and our main result (Theorem \ref{main}). In a second part we give an example showing that our main result cannot be extended in a more general setting.

Finally, let us mention that this irreducibility criterion is very useful in the study of quasi-ordinary hypersurfaces (see \cite{ACLM2} or \cite{MS}).


\section{An irreducibility criterion}
We denote by $\k[[x]]$ the ring of formal power series in $n$ variables $x:=(x_1,\ldots,x_n)$ over a field $\k$. For any vector $\b\in\Z^n$ we set
$$x^\b:=x_1^{\b_1}\cdots x_n^{\b_n}$$
and for any positive integer $q$ 
$$x^q:=x_1^q\cdots x_n^q.$$
Let $P(Z)\in \A[Z]$ be a monic polynomial with coefficients in $\A$. We denote by $\NP(P)$ the Newton polyhedron of $P(Z)$. Let us write
$$P(Z)=Z^d+\sum_{\a\in\Z_{\geq 0}^n,j< d} c_{\a,j}x^\a Z^j.$$
In this note we assume that $P(Z)\neq Z^d$.
The \emph{associated polyhedron} of $P$, denoted by $\D_P$, is the convex hull of
$$\left\{\frac{d\a}{d-j}\mid c_{\a,j}\neq 0\right\}+\R_{\geq 0}^n.$$
Note that $\D_P$ is the projection of $\NP(P)$ from the point $ (0, \ldots, 0, d )$ on the subspace given by the first $ n $ coordinates.

\begin{definition}\label{defi1}
Let $\o\in\R_{>0}^n$. For a non zero element $\displaystyle b=\sum_{\a\in\Z_{\geq 0}^n}b_\a x^\a$ of $\A$ we set 
$$\nu_\o(b):=\min\{\a\cdot\o = \sum_{i=1}^n \a_i \o_i \mid b_\a\neq 0\} \in \R_{\geq 0}$$
$$\text{ and } \In_\o(b):=\sum_{\a\mid \a\cdot\o=\nu_\o(b)}b_\a x^\a.$$
For such a $\o$ and $P(Z)\in \A[Z]$ as before we define $\o_{n+1}\in\R_{\geq 0}$ by
$$\o_{n+1}:=\frac{\min\{v\cdot\o\mid v\in \D_P\}}{d}\in\R_{\geq 0}.$$
Then we set $\o':=(\o,\o_{n+1})$ and we define
$$\nu_{\o'}(P):=\min\{\a\cdot\o+j\o_{n+1}\mid c_{\a,j}\neq 0\}=d\o_{n+1}$$
$$\text{ and }\In_{\o'}(P):=Z^d+\sum_{(\a,j)\mid (\a,j)\cdot\o'=\nu_{\o'}(P)}c_{\a,j} x^\a Z^j.$$
This former polynomial is weighted homogeneous for the weights $\o_1$,..., $\o_n$, $\o_{n+1}$.

\end{definition}

 
\begin{definition}
Let $P(Z)\in\A[Z]$ be a monic polynomial of degree $d$ in $Z$. The polynomial $P$ has an \emph{orthant associated polyhedron} if $\Delta_P=d\g+\R^n_{\geq 0}$ for some $\g\in\Q_{\geq 0}^n$.\\
In this case $\In_{\o'}(P)$ does not depend on $\o$ and we denote it by $P_{\In}$, i.e. 
$$P_{\In}(x,Z):=Z^d+\sum_{(\a,j)\mid \frac{\a}{d-j}=\g}c_{\a,j}x^\a Z^j.$$
In this case we define
$$\ovl P(Z):=P_{\In}(1,Z)=Z^d+\sum_{(\a,j)\mid \frac{\a}{d-j}=\g}c_{\a,j}Z^j\in\k[Z].$$
If we write $\g=\frac{\b}{q}$, where $\b\in\Z_{\geq 0}^n$, $q\in\{1,\ldots,d\}$ and $\gcd(\b_1,\ldots,\b_n,q)=1$, we have that

$$x^{d \b}\ovl P(Z)=P_{\In}(x_1^{q},\ldots, x_n^q,x^{\b } Z).$$
\end{definition}

Here is a picture of the Newton polyhedron of a  polynomial having an orthant associated polyhedron with $n=2$  (thick lines represent the edges of the Newton polyhedron) :
 $$ \begin{tikzpicture}[scale=0.8]
  
  \draw[thick,->] (0,0,0) -- (4,0,0) node[anchor=north east]{$x_2$};
\draw[thick,->] (0,0,0) -- (0,4,0) node[anchor=north west]{$Z$};
\draw[thick,->] (0,0,0) -- (0,0,6) node[anchor=south]{$x_1$};

 \filldraw [black] (0,3,0)  coordinate (c) circle (.7pt) node[left] {$(0,d)$};
 
 \draw (c) -- (1,2,1) coordinate (b) circle (.7pt) node[left] {$(\a, j)$}; 
 \draw[dashed] (b) -- (3,0,3) coordinate (a) circle (.7pt) node[left] {$d\g=\frac{d\a}{d-j}$};
 \draw[dashed] (a) -- (3,0,6.5) ;
 \draw[dashed] (a) -- (5.4,0,3) ;
 
 \draw (c) -- (0,3,7) ;
 \draw (c) -- (6,3,0) ;
 
 \draw (b) -- (4,2,1) ;
 \draw (b) -- (1,2,7);
 \draw (b) -- (3,0,5) coordinate (d) ;
 \draw (b) -- (4.5,0,3) coordinate (e);
 \draw (d) -- (e) ;
 \draw (d) -- (3,0,6.5);
 \draw (e) -- (5.4,0,3);

      \end{tikzpicture}  $$


\begin{theorem}\label{main}
Let us assume that $P(Z)$ is irreducible and has an orthant associated polyhedron.
 Then $P_{\In}(x,Z)\in\k[x,Z]$ is not the product of two coprime polynomials. \end{theorem}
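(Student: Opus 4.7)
I would prove the contrapositive: assume $P_{\In}(x, Z) = F(x, Z) G(x, Z)$ with $F, G \in \k[x, Z]$ coprime and, after scaling, both monic in $Z$ of positive degrees $e_F, e_G$, and derive a nontrivial factorization of $P$ in $\k[[x]][Z]$, contradicting irreducibility. The degenerate case $\g = 0$ gives $P_{\In} = \ovl P$, to which Hensel's Lemma applied to $\k[[x]]$ with $\m = (x_1,\ldots,x_n)$ yields the result directly; so I assume $\g \neq 0$, equivalently $\b \neq 0$.

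\textbf{Structure of $F$, $G$ and dehomogenization.} Since $P_{\In}$ is weighted homogeneous with respect to the weights $(\o, \g \cdot \o)$ for every $\o \in \R^n_{>0}$ and $\k[x, Z]$ is a graded UFD for each such grading, the factors $F, G$ are themselves weighted homogeneous. Varying $\o$ forces $\a + j\g$ to be constant, equal to $e_F \g$, on the support of $F$; the hypothesis $\gcd(\b_1,\ldots,\b_n,q) = 1$ then forces $q \mid (e_F - j)$. Hence $F(x, Z) = \ovl F(x^\b, Z)$ for some $\ovl F \in \k[T, Z]$ in a single new variable $T$, and similarly $G(x, Z) = \ovl G(x^\b, Z)$, with $\ovl F \cdot \ovl G = \ovl P_{\In}$. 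Setting $T = 1$ gives $\ovl P(Z) = \ovl F(1, Z) \cdot \ovl G(1, Z)$ in $\k[Z]$, both factors monic of positive degree. To verify coprimeness in $\k[Z]$, I would compare irreducible factorizations over $\ovl\k$: each irreducible factor of a $(q,1)$-weighted homogeneous polynomial in $\ovl\k[T, Z]$ is either $Z$ or $Z^q - cT$ (with $c \in \ovl\k^*$, irreducible since linear in $T$), and coprimeness of two such products in $\ovl\k[T, Z]$ imposes precisely the same condition as coprimeness of their $T = 1$ specializations in $\ovl\k[Z]$.

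\textbf{Hensel via a ramified cover.} Introduce new variables $y_1, \ldots, y_n$ with $y_i^q = x_i$, and let $R' := \k[[y]]$, a Henselian local ring containing $\k[[x]] = \k[[y^q]]$ as a finite free subring. Define $\tilde P(y, W) \in R'[W]$ by $y^{d\b}\, \tilde P(y, W) = P(y^q, y^\b W)$; the orthant condition $q\a \ge (d-j)\b$ on $\Supp(P)$ ensures this is monic of degree $d$ in $W$ with $\tilde P(0, W) = \ovl P(W)$. Applying Hensel's Lemma to the coprime factorization $\ovl P = \ovl F(1, W) \cdot \ovl G(1, W)$ yields monic coprime $\hat F, \hat G \in R'[W]$ with $\tilde P = \hat F \cdot \hat G$. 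Undoing the substitution, $\hat F'(y, Z) := y^{e_F \b}\, \hat F(y, Z/y^\b)$ and $\hat G'(y, Z) := y^{e_G \b}\, \hat G(y, Z/y^\b)$ lie in $R'[Z]$, are monic in $Z$ of degrees $e_F, e_G$, and satisfy $P(y^q, Z) = \hat F'(y, Z) \cdot \hat G'(y, Z)$.

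\textbf{Galois descent.} The main obstacle is to descend this factorization from $R'[Z]$ to $\k[[x]][Z]$. From the definition of $\tilde P$, a direct computation gives the twisted equivariance $\tilde P(\zeta y, W) = \zeta^{-d\b}\, \tilde P(y, \zeta^\b W)$ for every $\zeta \in \mu_q(\ovl\k)^n$; combined with the identity $\zeta^{-e_F \b}\, \ovl F(1, \zeta^\b W) = \ovl F(1, W)$ (valid because $\ovl F(1, W)$ contains only monomials $W^{e_F - kq}$ and $\zeta^{q\b} = 1$), the uniqueness in Hensel's Lemma forces $\hat F(\zeta y, W) = \zeta^{-e_F \b}\, \hat F(y, \zeta^\b W)$, and similarly for $\hat G$. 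Unwinding the definition yields $\hat F'(\zeta y, Z) = \hat F'(y, Z)$ for every $\zeta \in \mu_q^n$, so $\hat F', \hat G' \in \k[[y^q]][Z] = \k[[x]][Z]$. This produces the promised nontrivial factorization $P = \hat F' \cdot \hat G'$ in $\k[[x]][Z]$, contradicting the irreducibility of $P$.
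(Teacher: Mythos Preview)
Your overall strategy coincides with the paper's: pass to the ramified cover $x_i = y_i^q$, substitute $Z = y^\b W$ so that the orthant condition makes $\tilde P$ monic with $\tilde P(0,W)=\ovl P(W)$, apply Hensel's Lemma, undo the substitution, and then descend the resulting factorization of $P(y^q,Z)$ from $\k[[y]][Z]$ back to $\k[[y^q]][Z]=\k[[x]][Z]$. Your verification that $\ovl F(1,Z)$ and $\ovl G(1,Z)$ remain coprime after setting $T=1$ is in fact more careful than the paper, which simply asserts it.

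The descent step, however, has a genuine gap in positive characteristic. You conclude from $\hat F'(\zeta y,Z)=\hat F'(y,Z)$ for all $\zeta\in\mu_q(\ovl\k)^n$ that $\hat F'\in\k[[y^q]][Z]$. This is valid only when $\mu_q(\ovl\k)$ has order $q$, i.e.\ when $\cha(\k)=0$ or $\cha(\k)=p\nmid q$. If $p\mid q$, say $q=p^e m$ with $p\nmid m$, then $\mu_q(\ovl\k)=\mu_m(\ovl\k)$ and your invariance argument only yields $\hat F'\in\k[[y^m]][Z]$; nothing you have written bridges the remaining gap from $y^m$ to $y^{mp^e}=y^q$. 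Since the theorem is asserted over an arbitrary field and the case $p\mid q$ certainly occurs, this is not a cosmetic issue. The paper handles descent by a different device (its Lemma~\ref{lemma1}): it forms $Q:=\prod_{\xi\in\mu_m(\ovl\k)^n} R_1(\xi x,Z)^{p^e}$, observes that the $\mu_m$-symmetrization together with the $p^e$-th power forces $Q\in\k[[x^q]][Z]$, and then recovers $R_1$ as $\gcd(P_0,Q)$ computed in $\k(\!(x^q)\!)[Z]$, using that the gcd of two polynomials is insensitive to extension of the coefficient field. You could repair your argument either by invoking this GCD trick at the end, or by upgrading your $\mu_q(\ovl\k)$-invariance to a genuine $\mu_q$-equivariance as group schemes (fppf descent), but as written the proof is incomplete in characteristic $p$ dividing $q$.
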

 
 \begin{proof}
 Let us assume that $P_{\In}(x,Z)$ is the product of two coprime polynomials of $\k[x,Z]$. We denote by $P_1(x,Z)$ and $P_2(x,Z)$ these two polynomials, so we have
 $$P_{\In}(x,Z)=P_1(x,Z)\cdot P_2(x,Z),$$ 
 and we may assume that they are monic respectively of degree $d_1$ and $d_2$ (with $d_1+d_2=d$) since $P_{\In}(x,Z)$  is monic.
 Let us write $\ovl P_i(Z):=P_i(1,Z)$ for $i=1,2$. Thus we have that
 $$\ovl P(Z)=\ovl P_1(Z)\cdot\ovl P_2(Z).$$
 
 Let $M=M(x,Z):=c\,x^\a Z^j$ be a monomial of $P(x,Z)$. We have that
 $$M(x_1^q,\ldots,x_n^q,x^{\b} Z)=c\,x^{q\a+j\b}Z^j$$
 and $q\a+j\b\geq_\ast d\b$ since $\frac{\a}{d-j}\geq_\ast \frac{\b}{q}$ if $j<d$, where $ \geq_\ast $ denotes the product order on $ \R^n_{\geq 0} $. Thus we have
 \begin{equation}\label{key}P(x_1^q,\ldots,x_n^q,x^{\b} Z)=x^{d\b}\left(\ovl P(Z)+Q(x,Z)\right)\end{equation}
for some $Q(x,Z)\in (x)\A[Z]$. In particular,  $\ovl P(Z)+Q(x,Z)= \ovl P_1(Z)\ovl P_2(Z)$ modulo $(x)$. Thus by Hensel's Lemma 
 $$\ovl P(Z)+Q(x,Z)=\widetilde P_1(x,Z)\cdot\widetilde P_2(x,Z),$$ 
 for some monic polynomials $\widetilde P_1(x,Z)$ and $\widetilde P_2(x,Z)\in\A[Z]$ equal respectively to $\ovl P_1(Z)$ and $\ovl P_2(Z)$ modulo $(x)$.  So we have that
 $$P(x^{q},x^{\b} Z)=\left(x^{d_1\b}\widetilde P_1(x,Z)\right)\cdot\left(x^{d_2\b}\widetilde P_2(x,Z)\right)$$
 and
 $$\left[x^{d_i\b}\widetilde P_i(x,Z)\right]_{\In}=P_i(x^q,x^\b Z) \text{ for } i=1,2.$$
 But we have that
 $$x^{d_i\b}\widetilde P_i(x,Z)=R_i(x,x^{\b}Z)$$
 for some monic polynomials $R_i(x,Z)\in\k[[x]][Z]$ of degree $d_i$.
 Thus
 $$P(x^{q},Z)=R_1(x,Z)\cdot R_2(x,Z)$$
 and ${R_i}_{\In}(x,Z)=P_i(x^q,Z)\in\k[x^q,Z]$ for $i=1,2$.
 Since $P_{\In}=\In_{\o'}(P)$ and ${R_i}_{\In}=\In_{\o'}(R_i)$ for $i=1,2$,  for any $\o\in\R_{>0}^n$ we can apply Lemma \ref{lemma1} for $ P_0=P(x^{q},Z)$ to see that $R_1(x,Z)$, $R_2(x,Z)\in\k[[x^q]][Z]$. Hence $P$ is not irreducible.

 \end{proof}
 
 \begin{remark}
 The key point in the proof of this theorem is the fact that equation \eqref{key} is satisfied when  $P$ has an orthant associated polyhedron.
 \end{remark}
 
  
 \begin{lemma}\label{lemma1}
 Let $P_0\in\k[[x^q]][Z]$ be a monic polynomial, where $q\in\Z_{>0}$, and let us assume that $P_0=R_1R_2$, where $R_1$ and $R_2$ are monic polynomials of $\k[[x]][Z]$. Let $\o\in\R_{>0}^n$ and let $\o'$ be defined as in Definition \ref{defi1}. If $\In_{\o'}(R_1)$, $\In_{\o'}(R_2)\in\k[x^q,Z]$ and if they are coprime then $R_1$, $R_2\in\k[[x^q]][Z]$.
 \end{lemma}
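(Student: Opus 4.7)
The plan is to decompose $R_1$ and $R_2$ into their $\omega'$-weighted-homogeneous components and prove by induction on the level that every component lies in $\k[x^q,Z]$. The essential preliminary observation is that $\nu_{\omega'}(R_i)=d_i\omega_{n+1}$, where $d_i:=\deg_Z R_i$: the monic term $Z^{d_i}$ of $R_i$ forces $\nu_{\omega'}(R_i)\leq d_i\omega_{n+1}$, while multiplicativity of $\nu_{\omega'}$ together with the identity $\nu_{\omega'}(P_0)=d\omega_{n+1}$ (immediate from the definition of $\omega_{n+1}$ and the fact that $P_0$ is monic of degree $d$) yields $\nu_{\omega'}(R_1)+\nu_{\omega'}(R_2)=(d_1+d_2)\omega_{n+1}$, which forces equality in each summand. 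In particular, $A_i:=\In_{\omega'}(R_i)$ is monic of degree $d_i$ in $Z$.

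Next, I would write $R_i=\sum_{M\geq 0}R_i^{[M]}$, with $R_i^{[M]}$ the $\omega'$-homogeneous part of $R_i$ at level $\nu_i+M$, and induct on $M$ to show $R_i^{[M]}\in\k[x^q,Z]$. The base case $M=0$ is the hypothesis. For the induction step with $M>0$, extract the $\omega'$-homogeneous component of $P_0=R_1R_2$ at level $\nu_1+\nu_2+M$. Since $P_0\in\k[[x^q]][Z]$ and all mixed products $R_1^{[M']}R_2^{[M-M']}$ with $0<M'<M$ belong to $\k[x^q,Z]$ by the inductive hypothesis, one obtains, with $u_i:=R_i^{[M]}$,
$$A_1u_2+A_2u_1\in\k[x^q,Z].$$

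To conclude $u_1,u_2\in\k[x^q,Z]$, I would use the natural $(\Z/q\Z)^n$-grading on $\k[x,Z]$ in which $x^\beta Z^j$ has degree $\beta\bmod q$; its degree-$0$ part is exactly $\k[x^q,Z]$, and $A_1,A_2$ live in it. Decomposing $u_i=\sum_{\bar\alpha}u_{i,\bar\alpha}$ and comparing the degree-$\bar\alpha$ components of $A_1u_2+A_2u_1$ for $\bar\alpha\neq 0$ gives $A_1u_{2,\bar\alpha}+A_2u_{1,\bar\alpha}=0$, and coprimality of $A_1,A_2$ in the UFD $\k[x,Z]$ yields $A_1\mid u_{1,\bar\alpha}$. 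The hard part is the ensuing degree argument: since $M>0$, no monomial of $u_i$ involves $Z^{d_i}$ (that term sits at level $\nu_i$), hence $\deg_Z u_i<d_i=\deg_Z A_i$, and the divisibility forces $u_{1,\bar\alpha}=0$; symmetrically $u_{2,\bar\alpha}=0$. This is exactly where the opening observation $\deg_Z A_i=d_i$ is indispensable---without the monicity of $A_i$ in $Z$ the argument would collapse.
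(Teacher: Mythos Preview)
Your argument is correct and takes a genuinely different route from the paper's. The paper proceeds by a symmetrization trick: writing $q=p^{e}m$ with $p=\cha\k$ (and $p:=1$, $m:=q$ in characteristic zero), it forms
\[
Q\;:=\;\prod_{(\xi_1,\ldots,\xi_n)} R_1(\xi_1 x_1,\ldots,\xi_n x_n,Z)^{\,p^{e}},
\]
the product running over all $n$-tuples of $m$-th roots of unity in an algebraic closure of $\k$. Then $Q\in\k[[x^q]][Z]$ and $\In_{\omega'}(Q)=\In_{\omega'}(R_1)^{m^{n}p^{e}}$, so the coprimality hypothesis forces $\gcd(P_0,Q)=R_1$ in $\k(\!(x)\!)[Z]$; since the gcd does not depend on the ground field, $R_1\in\k(\!(x^q)\!)[Z]\cap\k[[x]][Z]=\k[[x^q]][Z]$.

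Your approach instead runs a direct induction on the $\omega'$-level, reducing each step to a Sylvester-type relation $A_1u_2+A_2u_1\in\k[x^q,Z]$ and then killing the nonzero $(\Z/q\Z)^n$-graded pieces via coprimality of $A_1,A_2$ together with the degree bound $\deg_Z u_i<d_i$. The paper's proof is shorter and more conceptual (it is essentially a Galois-descent argument), but it requires the characteristic-dependent decomposition $q=p^{e}m$, passage to an algebraic closure for the roots of unity, and a gcd comparison over fraction fields. Your proof is longer but entirely elementary: it stays inside $\k[x,Z]$ throughout, works uniformly in all characteristics with no case distinction, and makes explicit where the monicity $\deg_Z A_i=d_i$ (your opening observation) is actually used.
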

 
 \begin{proof}
  If $\cha(\k)=p>0$ let us write $q=p^em$ with $m\wedge p=1$. If $\cha(\k)=0$ we set $m:=q$ and $p:=1$. Then we define
 $$Q:=\prod R_1(\xi_1 x_1,\ldots, \xi_n x_n, Z)^{p^e}$$
 where $(\xi_1,\ldots,\xi_n)$ runs over the $n$-uples of $m$-th roots of unity in an algebraic closure of $\k$. Then $Q\in\k[[x^q]][Z]$ and $\In_{\o'}(Q)=\In_{\o'}(R_1)^{m^np^e}$. Thus $\In_{\o'}(R_2)$ and $\In_{\o'}(Q)$ are coprime. Hence  the greatest common divisor of $P_0$ and $Q$ in $\k((x))[Z]$ is $R_1$. But the greatest common divisor does not depend of the base field so $R_1$ is also the greatest common divisor of $P_0$ and $Q$ in $\k((x^q))[Z]$ hence $R_1\in\k[[x^q]][Z]$. By symmetry we also get $R_2\in\k[[x^q]][Z]$.
 \end{proof}
 

\begin{corollary}
Let us assume that $P(Z) = Z^d + a_1 Z^{d-1} + \ldots + a_d \in \A[Z]$ is irreducible. Then we have the following properties:

\begin{enumerate}
\item[i)] If $P(Z)$ has an orthant associated polyhedron the convex hull of $\Supp(\In_{\o'}(P))$ is a segment  joining $(0,d)$ to $(d\g,0)$, and $d\g$ is the initial exponent of $a_d$ for the valuation $\nu_\o$.

\item[ii)] If $P(Z)$ has an orthant associated polyhedron let $u\in\Z^{n+1}$ be the primitive vector such that $m u=(-d\g,d)$ for some $m\in\N$, and set $y:=(x,Z)$. Then we can write
$$P_{\In}(x,Z)=x^{d\g}Q(y^u)$$
where $Q(T)\in\k[T]$ is not the product of two coprime polynomials. In particular, $Q(T)$ has only one root in an algebraic closure of $\k$.

\item[iii)] If the Newton polyhedron of $P(Z)$ has no compact face of dimension $>1$ then $P(Z)$ has an orthant associated polyhedron and its Newton polyhedron
  has only one compact face of dimension one which is the segment of i).

\end{enumerate}
\end{corollary}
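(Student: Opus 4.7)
For (i), the inclusion $\Supp(P_{\In})\subseteq[(0,d),(d\g,0)]$ is automatic from the definition, since every monomial of $P_{\In}$ has exponent $(\a,j)$ with $\a=(d-j)\g$. The substantive claim is $c_{d\g,0}\neq 0$. I argue by contradiction: if $c_{d\g,0}=0$, then $Z\mid P_{\In}$, and writing $P_{\In}=Z^k\cdot P'$ with $k\geq 1$ maximal and $Z\nmid P'$, the orthant hypothesis forces the vertex $d\g$ of $\D_P$ to be attained by some support point of $P$, so $P_{\In}\neq Z^d$, $k\leq d-1$, and $\deg_Z P'\geq 1$. Then $Z^k\cdot P'$ is a nontrivial coprime factorization of $P_{\In}$ in $\k[x,Z]$, contradicting Theorem \ref{main}. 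That $d\g$ is the initial exponent of $a_d$ for $\nu_\o$ follows because every $(\a,0)\in\Supp(P)$ has $\a\in d\g+\R^n_{\geq 0}$ and, since $\o>0$, the minimum of $\a\cdot\o$ is uniquely attained at $\a=d\g$.

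For (ii), setting $Q(T):=\sum_{k=0}^{m}c_{(m-k)\b,kq}\,T^k$ one checks directly that $Q$ is monic of degree $m=d/q$ and $x^{d\g}Q(x^{-\b}Z^q)=P_{\In}(x,Z)$: the constraint $\a=(d-j)\g$ with $(\a,j)\in\Z^{n+1}_{\geq 0}$ forces $q\mid d-j$ and $\a=(m-k)\b$ with $j=kq$. Suppose for contradiction $Q=Q_1Q_2$ with $Q_i\in\k[T]$ coprime of degrees $d_i$; put $\widetilde P_i(x,Z):=x^{d_i\b}Q_i(x^{-\b}Z^q)$, so that $\widetilde P_i\in\k[x,Z]$ and $\widetilde P_1\widetilde P_2=P_{\In}$. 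Factoring $Q_i=c_i\prod_{\alpha\in A_i}(T-\alpha)^{e_\alpha}$ in $\ovl\k[T]$ with $A_1\cap A_2=\emptyset$ yields $\widetilde P_i=c_i\prod_{\alpha\in A_i}(Z^q-\alpha x^\b)^{e_\alpha}$. Any common $\ovl\k[x,Z]$-irreducible factor of $\widetilde P_1,\widetilde P_2$ would divide two factors $Z^q-\alpha_\ell x^\b$ with $\alpha_1\neq\alpha_2$, hence their difference $(\alpha_1-\alpha_2)x^\b$; the only irreducible divisors of the monomial $x^\b$ are the $x_i$, and no $x_i$ divides $Z^q-\alpha x^\b$ (set $x_i=0$). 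Thus $\widetilde P_1,\widetilde P_2$ are coprime in $\k[x,Z]$, contradicting Theorem \ref{main}. The ``only one root'' assertion is then handled by running the same argument over $\ovl\k$.

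For (iii), assume $\NP(P)$ has no compact face of dimension $>1$. I first show $\D_P$ is an orthant. If not, $\D_P$ has two distinct vertices; pick two whose chambers in the normal fan of $\D_P$ share a facet, and pick $\o\in\R^n_{>0}$ in the relative interior of that facet (possible because the full-dimensional chambers of the normal fan cover $\R^n_{>0}$, $\D_P$ being a polyhedron with recession cone $\R^n_{\geq 0}$). Then $\o\cdot v_1=\o\cdot v_2=d\o_{n+1}$; writing $v_i=d\a_i/(d-j_i)$ for support points $(\a_i,j_i)$, a direct computation gives $\o'\cdot(0,d)=\o'\cdot(\a_1,j_1)=\o'\cdot(\a_2,j_2)=d\o_{n+1}$, so all three lie on the $\o'$-minimizing compact face of $\NP(P)$. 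They are affinely independent precisely because $v_1\neq v_2$, producing a compact face of dimension $\geq 2$---contradiction. Hence $\D_P=d\g+\R^n_{\geq 0}$, and (i) gives $(d\g,0)\in\Supp(P)$. For uniqueness of the compact edge, every support point $v=(\a,j)\neq(0,d),(d\g,0)$ satisfies $\a\geq_\ast(d-j)\g$; with $t=j/d\in[0,1]$ one has $v\geq_\ast t(0,d)+(1-t)(d\g,0)$, so $v\in\mathrm{conv}\{(0,d),(d\g,0)\}+\R^{n+1}_{\geq 0}$ and $v$ is not extreme. Thus the only vertices of $\NP(P)$ are $(0,d),(d\g,0)$, and the unique compact 1-face is their joining segment. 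The main obstacle is the fan-theoretic choice of $\o$ in the first step: a standard property of polyhedra with recession cone $\R^n_{\geq 0}$ but one that merits explicit verification.
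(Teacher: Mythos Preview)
Your argument follows the paper's closely. Part (i) is identical in spirit: if $c_{d\gamma,0}=0$ then $Z\mid P_{\In}$, and since $P_{\In}\neq Z^d$ this yields a coprime factorization contradicting Theorem~\ref{main}. In part (ii) you derive the same contradiction from a coprime factorization $Q=Q_1Q_2$, and you actually \emph{justify} the coprimality of the resulting factors $\widetilde P_i$ in $\k[x,Z]$, which the paper simply asserts. Part (iii) is the dual, normal-fan formulation of the paper's direct argument (pick adjacent vertices of $\Delta_P$ and exhibit a $2$-face of $\NP(P)$ over them); you also spell out the ``unique compact edge'' claim, which the paper's proof does not.

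One point deserves comment. Your justification of the ``only one root in $\bar\k$'' clause in (ii)---``run the same argument over $\bar\k$''---does not work as stated: Theorem~\ref{main} requires $P$ to be irreducible, and irreducibility of $P$ over $\k[[x]]$ does not pass to $\bar\k[[x]]$. The paper's proof has the parallel gap: it asserts that two distinct roots of $Q$ in $\bar\k$ yield a coprime factorization of $Q$ \emph{in $\k[T]$}, which is false in general. In fact the clause itself fails over non-closed fields: for $\k=\R$ and $n=1$, the polynomial $P(Z)=Z^2+x^2$ is irreducible in $\R[[x]][Z]$ with orthant associated polyhedron, yet $Q(T)=T^2+1$ has two distinct roots in $\C$. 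So here you inherit a defect of the stated corollary rather than introduce one; the first assertion of (ii), that $Q$ is not a product of two coprime polynomials in $\k[T]$, is what both proofs actually establish.
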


 \begin{proof}
 
If $P_{\In}(x,0)= 0$ then $Z$ divides $P_{\In}(x,Z)$. But by Theorem \ref{main} $P_{\In}(x,Z)$ is not the product of two coprime polynomials thus $P_{\In}(x,Z)=Z^d$. This contradicts the fact that $P_{\In}(x,Z)$ has a non zero monomial of the form $x^\a Z^j$ for $j<d$. Hence $P_{\In}(x,0)\neq 0$ and $i)$ is proven.\\

We can write 
$$P_{\In}(x,Z)=Z^d+\sum_{j=0}^{d-1}c_{(d-j)\g,j}x^{(d-j)\g} Z^j.$$
 So we have that
$$P_{\In}(x,Z)=x^{d\g}\left(\frac{Z^d}{x^{d\g}}+\sum_{j=0}^{d-1}c_{(d-j)\g,j} \, \frac{Z^j}{x^{j\g}} \right).$$
By $i)$ we have that $d\g\in\Z_{\geq 0}^n$. This implies that $j\g\in\Z_{\geq0}^n$ as soon as $c_{(d-j)\g,j}\neq 0$. For any such $j$, let $i\geq 0$ be such that 
\begin{equation}
\label{iu}
i u=(-j\g,j).
\end{equation}
Then $i \in\Z_{\geq 0}$ since $u$ is primitive.\\
 Thus 
$P_{\In}(x,Z)=x^{d\g}(y^{mu}+\sum_{i<m}c_i\,y^{iu})$, where
$$c_i:=c_{iu+(d\g,0)}\ \ \ \forall i.$$
We set $Q(T):=T^{m}+\sum_{i<m}c_iT^{i}\in\k[T]$. If $Q(T)$ has two distinct roots in an algebraic closure  of $\k$ then $Q(T)$ may be factorized as the product of two coprime monic polynomials, let us say $Q(T)=Q_1(T)\cdot Q_2(T)$, where $Q_1(T)$ and $Q_2(T)\in\k[T]$ are coprime and monic. Let $m_1$ and $m_2$ be the respective degrees of $Q_1$ and $Q_2$ and define $d_i\in\Z_{\geq 0}$ by
$$(-d_i\g,d_i)=m_i u\text{ for } i=1,2.$$
Then we have 
$$P_{\In}(x,Z)=x^{d\g}Q(y^u)=\left(x^{d_1\g}Q_1(y^u)\right)\cdot\left(x^{d_2\g}Q_2(y^u)\right).$$
Moreover, by \eqref{iu}, a monomial of $x^{d_1\g}Q_1(y^u)$ has the form
$$c\, x^{d_1\g}y^{iu}=c\, x^{d_1\g} \left( \frac{Z^j}{x^{j\g}} \right) = c\, x^{(d_1 - j)\g} Z^j,$$
for $ 0 \leq i \leq m_1 $, i.e.~for $ 0 \leq j \leq d_1 $.
Hence $x^{d_1\g}Q_1(y^u)\in\k[x,Z]$. By symmetry we also have that $x^{d_2\g}Q_2(y^u)\in\k[x,Z]$. 

Then the polynomials $P_1(x,Z):=x^{d_1\g}Q_1(y^u)$ and $P_2(x,Z):=x^{d_2\g}Q_2(y^u)$ are coprime which contradicts Theorem \ref{main}. Thus $ii)$ is proven.\\

Let us assume that the Newton polyhedron of $P(Z)$ does not have an orthant associated polyhedron. This means that $\D_P$ has at least two distinct vertices denoted by $\g_1$ and $\g_2$ such that the segment $[\g_1,\g_2]$ is included in the boundary of $\D_P$. Thus the Newton polyhedron of $P$ has at least three different vertices $a:=(0,d)$, $b:=(\frac{d-j}{d}\g_1,j)$ and $c:=(\frac{d-k}{k}\g_2,k)$. 
Since $a$, $b$, $c$ are vertices of $\NP(P)$  the triangle delimitated by these three points is a face of $\NP(P)$ so the Newton polyhedron of $P$ has at least one face of dimension two.

 \end{proof}

\section{An example concerning compact faces of dimension $>1$}

Let $n=2$ and let us replace the variables $( x_1, x_2) $ by $ (x, y) $ for simplicity. We set 
$$ P(Z): = Z^2 - ( x^3 - y^5 )^2 + y^{11} = ( Z - x^3 + y^5)(Z + x^3 - y^5) + y^{11} $$
 seen as a polynomial of $\k[[x,y]][Z]$ where $\k$ is an algebraically closed field of  characteristic different from 2.\\
 We will show that $P$ does not have  an orthant associated polyhedron, since $\D_P$ has two different vertices. On the other hand, we will prove that $P(Z)$ is irreducible while for every $\o\in\R_{>0}^2$ the polynomial $\In_{\o'}(P)$ is always the product of two coprime monic polynomials. This shows that Theorem \ref{main} cannot be extended to polynomials without an orthant associated polyhedron.\\
 
 The Newton polyhedron of $P(Z)$ is the convex hull of
 $$\{(6,0,0), (0,10,0), (0,0,2)\}+\R_{\geq0}^3.$$
 The associated polyhedron $\D_P$ of $P(Z)$ is the convex hull of
 $$\{(6,0), (0,10)\}+\R_{\geq0}^2$$
 and has two vertices  $ v = ( 6, 0 ) $ and $ u = (0,10) $.
For $\o\in\R_{>0}^2$, if $6\o_1<10\o_2$ then
$$\In_{\o'}(P)=Z^2-x^6=(Z-x^3)(Z+x^3).$$
If  $6\o_1>10\o_2$ then we have that 
$$\In_{\o'}(P)=Z^2-y^{10}=(Z-y^5)(Z+y^5).$$
If $6\o_1=10\o_2$ we have that
$$\In_{\o'}(P)=Z^2-(x^3-y^5)^2=(Z - x^3 + y^5)(Z + x^3 - y^5).$$
Thus in all cases $\In_{\o'}(P)$ is the product of two coprime polynomials (since $\cha(\k)\neq 2$). 

On the other hand, $P(Z)$ is irreducible since $( x^3 - y^5 )^2 - y^{11} $ is not a square in $\k[[x,y]]$.


\begin{thebibliography}{00}

\bibitem[ACLM1]{ACLM1} E. Artal Bartolo, P. Cassou-Nogu\`es, I.  Luengo, A. Melle Hern\'andez,  On $\nu$-quasi-ordinary power series: factorization, Newton trees and resultants, \textit{Topology of algebraic varieties and singularities} (Jaca, 2009), Contemp. Math., vol. 538, Amer. Math. Soc., Providence, RI, 2011, pp. 321-343.

\bibitem[ACLM2]{ACLM2} E. Artal Bartolo, P. Cassou-Nogu\`es, I.  Luengo, A. Melle Hern\'andez, Quasi-ordinary singularities and Newton trees, \textit{Mosc. Math. J.}, \textbf{13}, (2013), no. 3, 365-398. 

\bibitem[GBGP]{GBGP} E. R. Garc\'ia Barroso, P. D. Gonz\'alez-P\'erez, Decomposition in bunches of the critical locus of a quasi-ordinary map, \textit{Compos. Math.}, \textbf{141}, (2005), no. 2, 461-486. 

\bibitem[MS]{MS} H. Mourtada, B. Schober, A polyhedral characterization of quasi-ordinary singularities, Arxiv:1512.07507.

\end{thebibliography}
\end{document}